\newcommand{\mbbR}{\mathbb{R}}
\newcommand{\mbbN}{\mathbb{N}}
\newcommand{\mbbZ}{\mathbb{Z}}
\newcommand{\mbbH}{\mathbb{H}}
\newcommand{\mbbF}{\mathbb{F}}
\newcommand{\eps}{\varepsilon}
\newcommand{\del}{\delta}
\newcommand{\suml}{\sum \limits}
\newcommand{\p}{\prime}
\newcommand{\wpart}[1]{{\left[ #1 \right]}}
\DeclareMathOperator{\diam}{diam}
\renewcommand{\le}{\leqslant}
\renewcommand{\ge}{\geqslant}
\theoremstyle{plain}
\newtheorem{thm}{Theorem}
\newtheorem*{thm*}{Theorem}
\newtheorem*{lm*}{Lemma}
\newtheorem{lm}[thm]{Lemma}
\newtheorem{cor}[thm]{Corollary}
\newtheorem{prop}[thm]{Proposition}
\newtheorem*{claim*}{Claim}
\theoremstyle{definition}
\newtheorem{defn}[thm]{Definition}
\theoremstyle{remark}
\newcommand{\lr}[1]{{\left( #1\right)}}
\DeclareMathOperator{\Aut}{Aut}
\newcommand{\acts}[1]{\stackrel{{{{#1}}}}{{\curvearrowright}}}
\begin{document}

\title{Scaling entropy growth gap}
\author{Georgii Veprev}
\date{\today}
\thanks{The work is supported by an RSF grant (project 21-11-00152).
}
\address{University of Geneva, Geneva, Switzerland}
\email{georgii.veprev@gmail.com}
\keywords{Scaling entropy, amenable group action, zero entropy, scaling entropy growth gap.}

\maketitle

{\rightline{\textit{To the 90'th anniversary of Anatoly Vershik}}}

\begin{abstract}
   The scaling entropy of a p.m.p. action is a slow-entropy type invariant that characterizes the intermediate growth of entropy in a dynamical system. An amenable group~$G$ has a scaling entropy growth gap if the scaling entropy of any its free p.m.p. action admits a non-trivial lower bound. We prove that the group $S_\infty$ of all finite permutations has a scaling entropy growth gap as well as the Houghton group $\mathcal{H}_2$. By proving this, we show that there are finitely generated amenable groups with this property.
\end{abstract}

\section{Introduction}

In this paper, we study slow entropy-type invariants of actions of amenable groups. We focus on the invariant proposed by Vershik called \emph{scaling entropy}~\cite{V10a, V10b, V12} although we also formulate our main results in terms of \emph{slow entropy} by Katok-Thouvenot~\cite{KT} and \emph{measure theoretic complexity} by Ferenczi~\cite{F}. All these invariants characterize an intermediate growth of entropy of a system which turns out to be an intrinsic characteristic that is invariant under measure-theoretic isomorphism. We give all the necessary definitions of the scaling entropy invariant in Section~\ref{section_scaling_entropy}.

A question that naturally arises is what are the possible asymptotics that may appear as values of such invariants. In recent papers~\cite{AGTW, S, Vep2, Vep22, R}, several attempts to answer such questions led to non-trivial applications. For the case of a single transformation, the possible values of scaling entropy were completely described in~\cite{PZ15, Z15b}. For amenable groups, this question becomes much harder. For any amenable group, it is shown in~\cite{Vep22} that the scaling entropy of a generic p.m.p. action of such a group can not be bounded from above by a given non-trivial sequence.  If the group is moreover residually finite (or, more generally, if it has a free compact action), then the scaling entropy of a generic action can not be bounded from below by a non-trivial rate. It was shown, however, that there are amenable groups that fail to satisfy the last property. Moreover, any free p.m.p. action of such a group satisfies a given lower bound on its scaling entropy. We call this property \emph{scaling entropy growth gap}.

Examples of amenable groups with a scaling entropy growth gap provided in~\cite{Vep22} are not finitely generated. Namely, they are the matrix groups $SL(2, {\bar\mbbF}_p)$ over the algebraic closure of a finite field~$\mbbF_p, \ p > 2$. The existence of the gap in these groups follows from Helfgott-type estimates of growth in finite linear groups (see, \cite{H, PS}).

In this paper, we provide an example of a \emph{finitely generated} amenable group such that the scaling entropy of any its free p.m.p. action has scaling entropy which is bounded from below by a given rate. It turns out that such an example can be obtained from the series of Houghton groups. To prove this we first show that the infinite symmetric group has a scaling entropy growth gap. 
\begin{thm}\label{theorem_sym}
    The group $S_\infty$ of all finite permutations has a scaling entropy growth gap.
\end{thm}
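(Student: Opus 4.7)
The plan is to establish the gap by contradiction, via a compactness--rigidity dichotomy: a free p.m.p.\ action of $S_\infty$ with trivial scaling entropy along the finite subgroups $S_n \subset S_\infty$ would yield a compact metric model onto which $S_\infty$ acts by isometries, but the representation-theoretic rigidity of $S_\infty$ would then force the limit action to factor through the unique non-trivial finite quotient $\mbbZ/2 \cong S_\infty/A_\infty$, in contradiction with freeness.

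Concretely, let $S_\infty \acts{\alpha} (X,\mu)$ be a free p.m.p.\ action and fix a generating admissible semi-metric $\rho$ on $X$ (for instance, the Hamming metric from a countable generating partition). Let $\rho_n$ denote the averaged semi-metric over $S_n$. Suppose for contradiction that the scaling entropy is trivial, so that $H_\eps(X,\mu,\rho_n)$ is bounded in $n$ for every $\eps > 0$. Together with the boundedness of $\rho$, this makes the sequence $(X, \rho_n, \mu)$ precompact in the Gromov--Prokhorov topology. Passing to a convergent subsequence yields a compact measure metric space $(Y,d,\nu)$. Each $\sigma \in S_N$ is an isometry of $\rho_n$ for $n \ge N$ (by $S_n$-invariance of the average), so $S_\infty$ acts on the limit by measure-preserving isometries, giving a homomorphism $\pi \colon S_\infty \to \mathrm{Iso}(Y,d)$ into the compact Polish group of isometries.

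The next step is the representation-theoretic rigidity: every homomorphism from $S_\infty$ into a compact group factors through $S_\infty/A_\infty \cong \mbbZ/2$. Indeed, by Peter--Weyl $\mathrm{Iso}(Y,d)$ is an inverse limit of compact Lie groups $L_i$, and Jordan's theorem provides a constant $c_i$ such that every finite subgroup of $L_i$ has an abelian normal subgroup of index at most $c_i$. For $n$ large enough depending on $i$, the only subgroups of $S_n$ of index $\le c_i$ contain $A_n$, and since $A_n$ is simple and non-abelian its image in $L_i$ must be trivial. Letting $n \to \infty$, the whole of $A_\infty$ lies in the kernel of the composition $S_\infty \to L_i$ for every $i$, and hence in $\ker \pi$. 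Consequently the limit action of $A_\infty$ on $(Y,d,\nu)$ is trivial.

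To derive the contradiction one must transfer this triviality back to the original action. Freeness of $\alpha$, combined with the generating property of $\rho$, gives a uniform positive lower bound on $\int \rho_n(x,\sigma x)\, d\mu$ for every non-trivial $\sigma \in A_\infty$, whereas the triviality of the $A_\infty$-action on the Gromov--Prokhorov limit forces this quantity to tend to zero along the convergent subsequence. I expect this final transfer to be the technical heart of the argument: one must control how individual group elements and their $S_n$-orbit geometry behave under Gromov--Prokhorov convergence and choose $\rho$ carefully so that freeness yields a uniform lower bound on $\rho_n(x,\sigma x)$ independently of $n$. The rigidity input (Peter--Weyl together with Jordan) is essentially standard; the combinatorial-geometric bridge between the $S_n$-orbit structure on $(X,\mu)$ and the limiting compact mm-space is the non-routine content that makes the proof specific to $S_\infty$.
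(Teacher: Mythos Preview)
Your argument, even if every technical step were carried out, would prove only that $S_\infty$ has no essentially free \emph{compact} action --- i.e.\ that no single free action has bounded scaling entropy. That is strictly weaker than what the theorem asserts. The scaling entropy growth gap requires a \emph{single} unbounded $\phi$ with $\mathcal{H}(\alpha,\lambda)\succsim\phi$ for \emph{every} free action $\alpha$ simultaneously. Your contradiction hypothesis ``$\mathbb{H}_\eps(X,\mu,\rho_n)$ is bounded in $n$'' is precisely the hypothesis that one particular $\alpha$ is compact; ruling this out does not exclude a sequence of free actions $\alpha_k$ whose scaling entropies grow, say, like an iterated logarithm of depth $k$, so that no common unbounded lower bound exists. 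The paper makes exactly this distinction explicit: the theorem is a \emph{quantitative} strengthening of the (already known) qualitative fact that $S_\infty$ has no free compact action.

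The paper's proof is accordingly quantitative from the start: it embeds $SL(2,\mathbb{F}_q)$ into $S_{m_q}$ in two carefully chosen ways so that a specific $p$-cycle $\sigma^0$ factors as $\sigma_1\sigma_2$, builds a cut semimetric $\rho$ that separates every point from its $\sigma^0$-image, and then pushes the $\eps$-entropy of the $S_{m_q}$-average down to a single $SL(2,\mathbb{F}_q)$-coset via Lemma~\ref{lm_lowerbound}. The Helfgott-type growth input (Proposition~\ref{prop_sl2}) then yields the explicit lower bound $c\log q \asymp \log\log|S_{m_q}|$, which is uniform over all free actions because the construction of $\rho$ depends only on freeness. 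Your Peter--Weyl/Jordan rigidity input is qualitatively the right phenomenon --- it is the representation-theoretic reason $S_\infty$ cannot act freely and compactly --- but it does not produce a rate, and a rate is the content of the theorem. If you wanted to rescue the compactness route, you would need to run it not against a single action but against a hypothetical sequence $\alpha_k$ beating every $\phi$, and extract a limiting free compact action from that sequence; this is a genuinely different (and harder) diagonal argument that your proposal does not attempt.
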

Note that~$S_\infty$ has only two finite-dimensional unitary representations: the sign of a permutation and the trivial one. Both of these do not distinguish even permutations and, therefore,~$S_\infty$ can not possess a free compact action. Theorem~\ref{theorem_sym} gives a quantitative version of this statement: any free action of $S_\infty$ is not only non-compact but also its scaling entropy is bounded from below by some given rate which does not depend on the action itself.

In section~\ref{section_closure} we prove that the property of scaling entropy growth gap extends from a subgroup to the ambient group. This allows us to provide an example of a finitely generated amenable group with this property. 

\begin{prop}\label{proposition_subgroup}
    Let~$H$ be a subgroup of an amenable group~$G$ and assume that~$H$ has a scaling entropy growth gap. Then the group~$G$ has a scaling entropy growth gap.
\end{prop}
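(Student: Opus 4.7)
The plan is to restrict a given free $G$-action to $H$, extract a non-trivial lower bound for its scaling entropy from the gap hypothesis on $H$, and then transfer this lower bound to the original $G$-action by means of a carefully chosen Følner sequence in $G$.

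First, let $G \curvearrowright (X, \mu)$ be an arbitrary free p.m.p.\ action. Since $H \le G$ and the $G$-action is free, the restricted action $H \curvearrowright (X, \mu)$ is also free. Applying the scaling entropy growth gap for $H$, one obtains an admissible pseudometric $\rho$ on $X$, a Følner sequence $\Phi_n \subset H$, and a non-trivial rate $h_n$ such that the $\varepsilon$-entropy of the averaged pseudometric $\rho_{\Phi_n}(x,y) = \frac{1}{|\Phi_n|}\sum_{\phi \in \Phi_n}\rho(\phi x, \phi y)$ grows at least like $h_n$.

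Next, using the amenability of $G$, I would construct a Følner sequence $F_n$ in $G$ adapted to $\Phi_n$. A natural choice is $F_n = T_n \Phi_n$, where $T_n \subset G$ is a finite set of representatives of left $H$-cosets, chosen via the Ornstein--Weiss quasi-tiling lemma so that the product is disjoint and $F_n$ is itself Følner in $G$. Under this choice, the $G$-averaged pseudometric decomposes as
\[
\rho_{F_n}(x,y) = \frac{1}{|T_n|}\sum_{t\in T_n} \rho^{(t)}_{\Phi_n}(x,y),
\]
where $\rho^{(t)}_{\Phi_n}(x,y) := \frac{1}{|\Phi_n|}\sum_{\phi \in \Phi_n}\rho(t\phi x, t\phi y)$. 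Each summand is the $\Phi_n$-average of the pullback of $\rho$ by the measure-preserving bijection $x \mapsto tx$, and therefore has the same $\varepsilon$-entropy as $\rho_{\Phi_n}$, which is bounded below by $h_n$.

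The technically hardest step is to pass from these lower bounds on the $\varepsilon$-entropies of the summands $\rho^{(t)}_{\Phi_n}$ to a lower bound on the $\varepsilon$-entropy of their average $\rho_{F_n}$. Averaging pseudometrics typically decreases $\varepsilon$-entropy, so the main obstacle is to control this decrease --- to argue that the entropies of the summands do not collapse. A promising route is to show that any efficient $\rho_{F_n}$-cover of $X$ by small balls induces, via the measure-preserving shifts $x \mapsto tx$, a $\rho^{(t)}_{\Phi_n}$-cover of controlled radius for most $t \in T_n$, so that the covering number cannot be too small. Quantifying this should yield a non-trivial (possibly slower) rate $\tilde h_n$ for the $G$-scaling entropy, thereby establishing the scaling entropy growth gap for $G$.
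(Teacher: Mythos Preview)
Your overall strategy---restrict the free $G$-action to $H$ and transfer the lower bound upward---matches the paper's. But the construction of the F\o lner sequence $F_n = T_n\Phi_n$ in $G$ is a genuine gap. The Ornstein--Weiss quasi-tiling lemma does not produce such sets: it tiles a given F\o lner set in $G$ by translates of smaller F\o lner sets \emph{in $G$}, not by products of an $H$-F\o lner set with coset representatives. For a general (non-normal, infinite-index) subgroup $H \le G$, left multiplication by an arbitrary $g \in G$ scrambles the $H$-coset structure, and you have given no argument that a set of the shape $T_n\Phi_n$ can be made left-F\o lner in~$G$.

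The paper sidesteps this entirely by reversing the order of averaging. It takes an \emph{arbitrary} F\o lner sequence $\{W_r\}$ in $G$ and, for each $n$, chooses $r$ so large that $|F_n W_r \triangle W_r| < 2^{-n}|W_r|$; this is just the F\o lner property of $W_r$ applied to the finite subset $F_n \subset G$. Then $\frac{1}{|W_r|}\sum_{g\in W_r} g^{-1}\bigl(H_{av}^{F_n}\rho\bigr)$ differs from $G_{av}^{W_r}\rho$ by at most~$2^{-n}$, and Lemma~\ref{lm_lowerbound} applied to this average of isometric copies of $H_{av}^{F_n}\rho$ yields $\mbbH_\eps(G_{av}^{W_r}\rho) \ge \mbbH_{4\sqrt\eps}(H_{av}^{F_n}\rho) \succsim \phi(n)$. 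Incidentally, that same lemma dissolves what you call the ``technically hardest step'': since your summands $\rho^{(t)}_{\Phi_n}$ are measure-preserving pullbacks of one another and hence share the same $\eps$-entropy, Lemma~\ref{lm_lowerbound} immediately bounds the $\eps$-entropy of their average from below by the $2\sqrt\eps$-entropy of any one of them. The real obstruction in your outline is not the averaging---it is building the special F\o lner sets, and the paper's fix is not to build them at all.
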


Houghton groups $\mathcal{H}_n$ were introduced in~\cite{Hou78}. A particular example from this series of groups, namely, the Houghton group $\mathcal{H}_2$ consists of all bijective maps from $\mbbZ$ to $\mbbZ$, generated by translations and finite permutations.  This group is amenable and is generated by the shift map and an elementary involution exchanging~$0$ and~$1$.
Since the Houghton group $\mathcal{H}_2$ by definition contains $S_\infty$ we obtain the following theorem as a corollary.
\begin{thm}\label{theorem_Houghton}
    The Houghton group $\mathcal{H}_2$ has a scaling entropy growth gap.
\end{thm}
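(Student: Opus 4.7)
The plan is straightforward: Theorem~\ref{theorem_Houghton} should fall out as an immediate corollary of Theorem~\ref{theorem_sym} combined with Proposition~\ref{proposition_subgroup}, so the proof proposal reduces to verifying the hypotheses of the latter.

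First I would recall the definition of $\mathcal{H}_2$ given in the text: it consists of all bijections $\mbbZ \to \mbbZ$ generated by the integer translations together with the finite permutations of $\mbbZ$. In particular, every finite permutation of $\mbbZ$ belongs to $\mathcal{H}_2$, so the natural inclusion realizes $S_\infty$ as a subgroup of $\mathcal{H}_2$. I would also note that $\mathcal{H}_2$ is known to be amenable (this is mentioned explicitly in the paragraph introducing the Houghton groups, and is classical for this family of groups).

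Next I would invoke Theorem~\ref{theorem_sym}, which supplies the fact that the subgroup $H = S_\infty$ has a scaling entropy growth gap. At this point the hypotheses of Proposition~\ref{proposition_subgroup} are fulfilled: $\mathcal{H}_2$ is an amenable group, $S_\infty$ is a subgroup of $\mathcal{H}_2$, and $S_\infty$ has the desired gap property. Applying the proposition then yields that $\mathcal{H}_2$ itself has a scaling entropy growth gap, which is exactly the claim.

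There is essentially no obstacle in this argument, because all the conceptual content is packaged into Theorem~\ref{theorem_sym} (quantitative non-compactness for $S_\infty$) and Proposition~\ref{proposition_subgroup} (transfer of the gap along inclusions). The only point worth flagging explicitly in writing is that the lower rate witnessing the gap for $\mathcal{H}_2$ will be the one produced by the subgroup argument in Proposition~\ref{proposition_subgroup} applied to the rate given by Theorem~\ref{theorem_sym}; in particular it does not depend on the specific free p.m.p. action of $\mathcal{H}_2$ chosen. I would conclude the proof with a one-line statement to this effect.
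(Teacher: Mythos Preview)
Your proposal is correct and matches the paper's own argument essentially verbatim: the paper derives Theorem~\ref{theorem_Houghton} as an immediate corollary of Theorem~\ref{theorem_sym} together with Proposition~\ref{proposition_subgroup}, using only that $S_\infty \le \mathcal{H}_2$ and that $\mathcal{H}_2$ is amenable. There is nothing to add or adjust.
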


\subsection{Scaling entropy}\label{section_scaling_entropy}
As we mentioned above the main object we deal with in this paper is \emph{the scaling entropy} of a p.m.p. action. This invariant was introduced by Vershik in~\cite{V10a, V10b, V12}. The main idea of Vershik is to introduce a measurable metric on a measure space and then consider the evolution of its geometrical numerical characteristics. Namely, we consider an average of the metric and the minimal number of balls (with respect to this average) of radius~$\eps$ needed to cover the entire space up to measure~$\eps$. We call the logarithm of this number  \emph{the $\eps$-entropy} of a metric triple. The asymptotic behavior of this function, as was conjectured by Vershik and proved by Zatitskii in~\cite{Z15a}, does not depend on the choice of the initial metric and is a measure-theoretic invariant of a system. Similar ideas were introduced by Katok-Thouvenot~\cite{KT} and Ferenczi~\cite{F} and led to related slow-entropy type invariants. We refer the reader to a recent survey~\cite{VVZ} for details on the theory of scaling entropy and a survey~\cite{KKW} for several other slow-entropy type invariants. 

Now let us proceed with formal definitions. Let $(X,\mu)$ be a standard probability space. We call measurable function $\rho\colon (X^2,\mu^2) \to [0, +\infty)$ \emph{a measurable semimetric\footnote{The reader could also find the term "quasimetric" in the literature.}} if it is symmetric and satisfies the triangle inequality. We call~$\rho$ summable if $\rho \in L^1(X^2,\mu^2)$. For a positive number~$\eps$, we define \emph{the $\eps$-entropy} as follows. Let~$k$ be the minimal positive integer such that the space~$X$ can be decomposed into a union of measurable subsets~$X_0, X_1, \ldots, X_k$ with $\mu(X_0) < \eps$ and $\diam_\rho(X_i) < \eps$ for $i = 1, \ldots, k$. Then we define
\begin{equation}
    \mbbH_\eps(X,\mu, \rho) = \log_2 k.
\end{equation}
If there is no such~$k$ we say that the $\eps$-entropy of the triple is infinite. 
A semimetric is called \emph{admissible} if all its $\eps$-entropies are finite. It is shown in~\cite{VPZ} that a semimetric is admissible if and only if it is separable on a subset of full measure. For any measurable partition~$\xi$, there is the corresponding \emph{cut semimetric} $\rho_\xi$ which equals~$0$ if two points lie in the same cell of~$\xi$ and equals~$1$ otherwise. A cut semimetric is admissible if and only if the corresponding partition is essentially countable.

Let $G$ be an amenable group and $\lambda = \{F_n\}$ be a F\o lner sequence. Let $\alpha \colon G \to \Aut(X,\mu)$ be a~p.m.p. action of~$G$ on a standard propbability space $(X,\mu)$.  For a measurable semimetric~$\rho$, we define its average over~$F_n$:
\begin{equation}
  G_{av}^n\rho(x,y) = \frac{1}{|F_n|} \sum\limits_{g \in F_n} \rho(gx, gy) \quad x,y \in X.  
\end{equation}
We will also occasionally use the notation $G_{av}^{F_n}\rho$ to emphasize the set $F_n$ over which the average is computed. For an element $g \in G$ we denote the $g$-translation of~$\rho$  by $g^{-1} \rho$ meaning that $g^{-1} \rho (x, y) = \rho(gx, gy), \ x, y \in X$. A semimetric is called \emph{generating} if all its translations together separate points mod 0 that is for a subset~$X^\p$ of full measure and any distinct $x, y \in X^\p$ there exists some $g \in G$ such $g^{-1} \rho (x,y) > 0$.

Then we consider the following function of two variables
\begin{equation}
   \Phi_\rho(n,\eps) = \mbbH_\eps(X, \mu, G_{av}^n \rho).
\end{equation}
The main idea behind the scaling entropy invariant is that the asymptotic behavior in~$n$ of this function does not depend on the choice of the initial semimetric~$\rho$. To justify this we need to define what \emph{the asymptotic behavior} of~$\Phi_\rho$ is. We say that two functions $\Phi, \Psi \colon \mbbN \times \mbbR_+ \to \mbbR_+$ that decrease in their second arguments are \emph{asymptotically equivalent} if for any~$\eps$ there exists a $\delta>0$ such that 
\begin{equation}
 \Phi(n, \eps) \precsim \Psi(n,\delta) \text{ and } \Psi(n, \eps) \precsim \Phi(n,\delta),  
\end{equation}
when $n$ goes to infinity (here and in what follows, for two sequences~$f$ and~$g$, relation $f \precsim g$ means that $\limsup_n f(n) / g(n) < \infty$). In this case, we write $\Phi \asymp \Psi$. We denote by $\wpart{\Phi}$ the equivalence class of function~$\Phi$ with respect to relation~$\asymp$ and call it \emph{the asymptotic class}. We say that $\Psi$ \emph{asymptotically dominates}~$\Phi$ and write $\Phi \precsim \Psi$ if for any~$\eps$ there exists a $\delta>0$ such that $\Phi(n, \eps) \precsim \Psi(n,\delta)$ when~$n$ goes to infinity. Clearly, if $\Phi \precsim \Psi$ and $\Psi \precsim \Phi$ hold simultaneously then~$\Phi \asymp \Psi$. Relation~$\precsim$ agrees with the equivalence relation~$\asymp$ and can be naturally applied to its equivalence classes.  

\begin{thm}[\cite{Z15a, Z15b}]\label{theorem_invariance}
    Let $\rho$ and $\omega$ be two summable admissible generating semimetrics. Then the functions~$\Phi_\rho$ and $\Phi_\omega$ are asymptotically equivalent. 
\end{thm}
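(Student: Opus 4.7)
The plan is to establish $\Phi_\rho \precsim \Phi_\omega$; the reverse direction follows by symmetry. The argument rests on three ingredients: a stability lemma for the $\eps$-entropy under small perturbations, an approximation of $\rho$ by a finite non-negative combination of translates of $\omega$ (exploiting the generating hypothesis), and F{\o}lner invariance of such translates under the averaging.

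First I would prove the stability statement: the quantity $\mbbH_\eps$ is robust under $L^1$-perturbations of the semimetric, in the sense that for any $\eps>0$ there exists $\eta=\eta(\eps)>0$ such that if $\sigma_1,\sigma_2$ are summable admissible semimetrics with $\|\sigma_1-\sigma_2\|_{L^1(X^2,\mu^2)} \le \eta$, then
\begin{equation}
\mbbH_{3\eps}(X,\mu,\sigma_1) \le \mbbH_{\eps}(X,\mu,\sigma_2).
\end{equation}
To see this, take an optimal cover of $X$ by $\sigma_2$-sets of diameter $<\eps$; Markov's inequality combined with Fubini shows that for most $x$ only a small-measure set of $y$ in the cell of $x$ can satisfy $|\sigma_1(x,y)-\sigma_2(x,y)|\ge \eps$, so these bad sets can be absorbed into the exceptional piece $X_0$ after a controlled enlargement of its measure. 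An analogous argument under the pointwise bound $\sigma_1 \le C\sigma_2$ gives the scaling comparison $\mbbH_{C\eps}(\sigma_1) \le \mbbH_{\eps}(\sigma_2)$.

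Next, using that the translates $\{g^{-1}\omega\}_{g\in G}$ separate points mod~$0$, I would show that $\rho$ can be approximated in $L^1(X^2,\mu^2)$, to within any prescribed error, by a semimetric of the form $\tilde\rho = \sum_{i=1}^N c_i\, g_i^{-1}\omega$ with $c_i\ge 0$. Summability of $\rho$ permits truncating outside a set of controlled $L^1$-mass; the generating property then lets one refine cut semimetrics coming from Borel partitions of $X$ by $\omega$-thresholds, since any two distinct points are separated by some $g^{-1}\omega > 0$ and each cut semimetric on the resulting partition is dominated by a suitable multiple of the corresponding $g^{-1}\omega$. For a fixed finite set $\{g_1,\ldots,g_N\}$, the F{\o}lner property yields
\begin{equation}
\bigl\|\, G_{av}^n(g_i^{-1}\omega) - G_{av}^n\omega\, \bigr\|_{L^1(X^2,\mu^2)} \longrightarrow 0 \qquad (n\to\infty),
\end{equation}
so $G_{av}^n\tilde\rho$ is dominated pointwise by $\bigl(\sum_i c_i\bigr) G_{av}^n\omega$ up to an arbitrarily small $L^1$-error for $n$ large.

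Combining everything, one applies the stability lemma to $G_{av}^n\rho$ versus $G_{av}^n\tilde\rho$, absorbs the F{\o}lner error by a second application of the same lemma, and finishes with the pointwise-comparison version to pass from $\tilde\rho$ to $\omega$. This yields $\mbbH_{\eps}(X,\mu,G_{av}^n\rho) \le \mbbH_{\delta}(X,\mu,G_{av}^n\omega)$ for some $\delta=\delta(\eps)$ and all sufficiently large $n$, which is the required asymptotic domination. The principal obstacle is the second step: the generating hypothesis provides only point separation, and upgrading it to an $L^1$-approximation of an admissible semimetric by non-negative combinations of translates of $\omega$ is the technical heart of the argument; it requires using summability of $\rho$ to handle its tails and an iterative refinement procedure to build the combination with controllable coefficients.
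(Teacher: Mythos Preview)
The paper does not contain a proof of this theorem. It is quoted from the references~\cite{Z15a, Z15b} (Zatitskiy) and used as a black box to make the definition of scaling entropy well-posed; no argument, not even a sketch, appears in the present paper. Hence there is nothing to compare your proposal against here.

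For what it is worth, your outline is in the spirit of the actual proof in the cited works: stability of $\eps$-entropy under $L^1$-perturbation, reduction to cut semimetrics coming from finite partitions, and F{\o}lner almost-invariance to kill the translates. Your step~2, however, is stated too optimistically. Point separation by $\{g^{-1}\omega\}$ does not by itself yield an $L^1$ approximation of an arbitrary admissible summable $\rho$ by \emph{non-negative} combinations $\sum c_i g_i^{-1}\omega$; one has to pass through the intermediate structure of finite measurable partitions (any admissible summable semimetric is $L^1$-approximable by cut semimetrics of finite partitions, and any finite partition can be $\eps$-refined, in the appropriate metric on partitions, by a partition generated by finitely many translates of $\omega$-preimages). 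Your parenthetical hints at this, but the way you phrase the conclusion---a pointwise domination $G_{av}^n\tilde\rho \le (\sum c_i) G_{av}^n\omega$ up to small $L^1$ error---would need the coefficients $\sum c_i$ to stay bounded uniformly as the approximation improves, and that is exactly what fails if one tries to dominate a cut semimetric by a multiple of $g^{-1}\omega$ directly. The correct route compares $\eps$-entropies of nearby partitions rather than seeking a pointwise inequality between the semimetrics.
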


Theorem~\ref{theorem_invariance} gives rise to the following definition. 
\begin{defn}
    \emph{The scaling entropy} of a p.m.p. action of an amenable group~$G$ is the asymptotic class $\mathcal{H}(X,\mu, G, \lambda)$ of the function~$\Phi_\rho$ for some (hence for any) summable admissible generating semimetric~$\rho$.
\end{defn}
We will also use a shorter notation~$\mathcal{H}(\alpha, \lambda)$ for the scaling entropy of action~$\alpha$. Note that by definition $\mathcal{H}(\alpha, \lambda)$ depends on the choice of the F\o lner sequence~$\lambda$. 

For any action~$\alpha$ of an amenable group~$G$ there are two simple natural bounds for the scaling entropy (see, e.\,g.,~\cite{Vep2}):
\begin{equation}
  {1} \precsim \mathcal{H}(\alpha, \lambda) \precsim {|F_n|}.
  \end{equation}
Moreover, the scaling entropy is bounded (meaning that $\mathcal{H}(\alpha, \lambda) \asymp 1$) if and only if the action is compact (see~\cite{VPZ, YZZ}). Equivalence $\mathcal{H}(\alpha, \lambda) \asymp |F_n|$ characterizes exactly the systems with positive measure-theoretic entropy (see~\cite{Vep2}). 

For the classical case of a single transformation with $\lambda$ being the sequence of standard intervals $\{0, \ldots, n\}$, it is shown in~\cite{PZ15, Vep1} that the scaling entropy~$\mathcal{H}$ always contains a function decreasing in~$\eps$ and increasing and subadditive in~$n$. Moreover, for any such function~$\Phi$, there exists an ergodic transformation whose scaling entropy is exactly $\wpart{\Phi}$. Therefore, the complete description of possible values of the invariant in the classical case was given. 

For a general amenable group, the exact description of the set of possible asymptotic classes that may appear as scaling entropy for some p.m.p. action is unknown. However, recently several steps were made towards understanding the properties of this set (see~\cite{Lott, Vep22}). Some of these results led to several non-trivial applications to \emph{universal systems} and \emph{generic extensions}. As shown in~\cite{Vep22}, for any amenable group, F\o lner sequence $\lambda = \{F_n\}$, and any sequence $\phi(n) = o(|F_n|)$ there exists a free p.m.p. action~$\alpha$ with $\mathcal{H}(\alpha,\lambda) \not \precsim \phi$ (see also \cite{Lott} for a related result regarding slow entropy). This result can be interpreted as the absence of a gap between the maximal possible growth~$|F_n|$ and any other slower rate~$\phi$. For a residually finite group, the same holds for the lower bound: for any unbounded sequence $\phi(n)$ there exists a free p.m.p. action~$\alpha$ with $\mathcal{H}(\alpha,\lambda) \not \succsim \phi$. The last statement does not hold for a general amenable group, examples of such groups were given in~\cite{Vep22}. This shows that there is a significant difference in the behavior of our invariant for different amenable groups. This paper is devoted to studying this property closely and finding more examples of such groups, in particular, finitely generated examples.  

\subsection{Scaling entropy growth gap}
 
\begin{defn}[\cite{Vep22}]\label{definition_gap}
    We say that an amenable group $G$ has \emph{a scaling entropy growth gap} if for some (then for any) F\o lner sequence $\lambda = \{F_n\}$ there exists an unbounded function~$\phi(n)$ such that for any essentially free p.m.p. action $\alpha$ we have $\mathcal{H}(\alpha, \lambda) \succsim \phi$.
\end{defn}
The correctness of this definition, that is independence of the F\o lner sequence, is proved in~\cite{Vep22}. Here we can give an equivalent definition of the gap property replacing essentially free actions with essentially faithful and restricting our considerations to ergodic actions.

\begin{prop}\label{proposition_definition}
    Let $G$ be an amenable group and $\lambda = \{F_n\}_n$ its F\o lner sequence. Assume that there is an unbounded function $\phi(n)$ such that for every ergodic essentially free p.m.p. action~$\alpha$ of~$G$ the lower bound $\mathcal{H}(\alpha, \lambda) \succsim \phi(n)$ holds true. Then for every essentially faithful p.m.p. action~$\beta$ the same lower bound $\mathcal{H}(\beta, \lambda) \succsim \phi(n)$ holds true.
\end{prop}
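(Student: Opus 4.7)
The plan is to deduce the required lower bound $\mathcal{H}(\beta,\lambda)\succsim \phi$ by constructing, out of the essentially faithful $\beta$, an essentially free action $\beta^{\mbbN}$ with asymptotically equivalent scaling entropy, and then applying the hypothesis to the ergodic components of $\beta^{\mbbN}$. Concretely, consider the diagonal action $\beta^{\mbbN}$ of $G$ on $(X^{\mbbN}, \mu^{\mbbN})$ given by $g \cdot (x_i)_{i \ge 1} = (g x_i)_{i \ge 1}$. Essential faithfulness of $\beta$ means $\mu(\mathrm{Fix}_\beta(g)) < 1$ for every $g \ne e$, whence
\begin{equation*}
   \mu^{\mbbN}\bigl(\mathrm{Fix}_{\beta^{\mbbN}}(g)\bigr) = \prod_{i \ge 1} \mu(\mathrm{Fix}_\beta(g)) = 0,
\end{equation*}
so $\beta^{\mbbN}$ is essentially free.

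Next I claim $\mathcal{H}(\beta^{\mbbN}, \lambda) \asymp \mathcal{H}(\beta, \lambda)$. Fix a bounded summable admissible generating semimetric $\rho \le 1$ on $X$; then $\rho^{\infty}((x_i), (y_i)) = \sum_{i \ge 1} 2^{-i} \rho(x_i, y_i)$ is summable, admissible, and generating on $X^{\mbbN}$, and $G_{av}^n \rho^{\infty}$ decomposes coordinatewise as the same weighted sum of $G_{av}^n \rho$. For fixed $\eps$, truncating the tail at $N \sim \log_2(1/\eps)$ and forming the $N$-fold product of a partition realizing $\mbbH_{\eps/(2N)}(X, \mu, G_{av}^n \rho)$ yields a cover of $X^{\mbbN}$ by sets of $G_{av}^n\rho^{\infty}$-diameter $\le \eps$ excluding mass $\le \eps$, so
\begin{equation*}
   \mbbH_\eps(X^{\mbbN}, \mu^{\mbbN}, G_{av}^n \rho^{\infty}) \le N \cdot \mbbH_{\eps/(2N)}(X, \mu, G_{av}^n \rho),
\end{equation*}
which gives $\mathcal{H}(\beta^{\mbbN}, \lambda) \precsim \mathcal{H}(\beta, \lambda)$. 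Conversely, $\beta$ is a $G$-equivariant factor of $\beta^{\mbbN}$ via projection onto the first coordinate, giving $\mathcal{H}(\beta, \lambda) \precsim \mathcal{H}(\beta^{\mbbN}, \lambda)$.

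Finally, let $\beta^{\mbbN} = \int \gamma_\omega\, d\eta(\omega)$ be the ergodic decomposition. The set $\{x \in X^{\mbbN} : G_x = \{e\}\}$ is $G$-invariant and of full $\mu^{\mbbN}$-measure, so the $0/1$-law on ergodic components gives that $\gamma_\omega$ is ergodic essentially free for $\eta$-a.e.\ $\omega$, and the hypothesis yields $\mathcal{H}(\gamma_\omega, \lambda) \succsim \phi$ for such $\omega$. A Markov-type restriction argument (a partition witnessing $\mbbH_\eps$ of $X^{\mbbN}$ restricts on $\eta$-a.e.\ component with conditional exceptional mass at most $\sqrt{\eps}$) shows $\mathcal{H}(\gamma_\omega, \lambda) \precsim \mathcal{H}(\beta^{\mbbN}, \lambda)$ for $\eta$-a.e.\ $\omega$, so $\mathcal{H}(\beta^{\mbbN}, \lambda) \succsim \phi$. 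Combined with the equivalence of the previous paragraph, $\mathcal{H}(\beta, \lambda) \succsim \phi$, as required. The main technical step is the upper bound in the entropy equivalence, where the tail truncation $N$ must be balanced against the resolution $\eps$ to keep $\rho^{\infty}$ admissible and generating on the infinite product; the remaining reductions rely on the monotonicity of scaling entropy under factor maps and on the Markov-type estimate under ergodic decomposition, both standard tools.
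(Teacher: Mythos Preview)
Your proposal is correct and follows essentially the same route as the paper: pass to the infinite diagonal power $\beta^{\mbbN}$ to upgrade essential faithfulness to essential freeness, observe that $\mathcal{H}(\beta,\lambda)\asymp\mathcal{H}(\beta^{\mbbN},\lambda)$, and then invoke the ergodic decomposition together with the fact that the scaling entropy of a system dominates that of almost every ergodic component. The paper states the power--invariance step as ``easy to see'' and cites~\cite{VVZ} for the ergodic--component domination, while you supply sketches of both; otherwise the arguments coincide.
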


\begin{proof}
    Let us denote by $(X, \mu)$ the measure space on which $\beta$ acts. Consider the infinite power~$\beta^\mbbN$ of the given action. For any non-identity element $g \in G$ we have $\mu(\{ x \colon \beta(g) x \not = x\}) > 0$ since~$\beta$ is essentially faithful. Then, clearly, $\mu^\mbbN(\{y \in X^\mbbN \colon \beta^\mbbN(g) y \not = y\}) = 1$, hence, the action $\beta^\mbbN$ is essentially free. It is easy to see that the scaling entropy of a countable power of a given p.m.p. action is the same, that is $\mathcal{H}(\beta, \lambda) = \mathcal{H}(\beta^\mbbN, \lambda)$.
    
    So, we may assume that $\beta$ is essentially free. The scaling entropy of a system dominates the scaling entropies of its ergodic components (see~\cite{VVZ}, the proof for amenable groups is the same) which are essentially free as well.  Since almost each ergodic component~$\mu_x$ satisfies  $\mathcal{H}(X, \mu_x, G, \lambda) \succsim \phi(n)$, we conclude that $\mathcal{H}(\beta, \lambda) \succsim \phi(n)$.
\end{proof}

As we already mentioned, residually finite amenable groups do not have a scaling entropy growth gap since they possess essentially free compact actions which have bounded scaling entropy. An example of an amenable group with a scaling entropy growth gap is the liner group  $SL(2, {\bar\mbbF}_p), \ p > 2$, as was shown in~\cite{Vep22} using Helfgott's estimates (see~\cite{H, PS}) in finite linear groups. In this paper, we provide more examples of such groups, in particular, a finitely generated one. Namely, we show that the infinite symmetric group $S_{\infty}$ of all finite permutations of integer numbers, and the Houghton group $\mathcal{H}_2$ have a scaling entropy growth gap. It is unknown to the author if there are amenable groups without an essentially free compact action that also have no scaling entropy growth gap.

\subsection{Two technical lemmas}
We will need several technical lemmas throughout our arguments. The following two estimates were proved in~\cite{PZ15}. 
\begin{lm}\label{lm_lowerbound}
        Let $\rho_1, \rho_2, \ldots, \rho_k$ be admissible semimetrics on~$\lr{X,\mu}$ such that almost surely $\rho_i \le 1$ for all $i = 1, \ldots, k$. Let $\tilde\rho = \frac{1}{k} (\rho_1+ \ldots+ \rho_k)$ and $\eps \in (0,1)$. Then there exists some $m \le k$ such that
            \begin{equation}
            \mbbH_{2\sqrt{\eps}}\lr{X,\mu, \rho_m} \le 
            \mbbH_{\eps}\lr{X,\mu, \tilde \rho}.
            \end{equation}
\end{lm}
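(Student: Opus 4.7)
The plan is to take a partition $X = X_0 \cup X_1 \cup \ldots \cup X_N$ realizing $\mbbH_\eps(X,\mu,\tilde\rho) = \log_2 N$ (so $\mu(X_0) < \eps$ and $\diam_{\tilde\rho} X_i < \eps$ for $i \ge 1$) and, for a well chosen index $m \le k$, shrink each $X_i$ to a $\rho_m$-ball of radius $\sqrt{\eps}$ around a suitably picked center $p_i \in X_i$, absorbing the mass lost during this shrinking into the exceptional set.

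The first step is to locate the good index $m$. For $(x,y)$ in the same cell $X_i$ we have $\tilde\rho(x,y) < \eps$, so Markov applied in the variable $m$ yields that the fraction of $m$ with $\rho_m(x,y) \ge \sqrt{\eps}$ is less than $\sqrt{\eps}$. Setting $r_i(m) := \mu^2\{(x,y) \in X_i^2 \colon \rho_m(x,y) \ge \sqrt{\eps}\}$, integrating this pointwise bound over $X_i^2$ and swapping sums gives
\begin{equation*}
\frac{1}{k}\sum_{m=1}^k \sum_{i=1}^N \frac{r_i(m)}{\mu(X_i)} \le \sqrt{\eps}\sum_i \mu(X_i) \le \sqrt{\eps},
\end{equation*}
so some $m$ achieves $\sum_i r_i(m)/\mu(X_i) \le \sqrt{\eps}$.

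Fix that $m$. Since the average over $p \in X_i$ of $\mu\{y \in X_i \colon \rho_m(p,y) \ge \sqrt{\eps}\}$ equals $r_i(m)/\mu(X_i)$, some point $p_i \in X_i$ realizes a value at most $r_i(m)/\mu(X_i)$. Define $Y_i := \{y \in X_i \colon \rho_m(p_i,y) < \sqrt{\eps}\}$, which has $\rho_m$-diameter strictly less than $2\sqrt{\eps}$ by the triangle inequality. Pushing the complements into the exceptional set, $Y_0 := X_0 \cup \bigcup_{i\ge 1} (X_i \setminus Y_i)$ has measure at most $\eps + \sqrt{\eps} < 2\sqrt{\eps}$ (using $\eps < 1$), and the partition $X = Y_0 \cup Y_1 \cup \ldots \cup Y_N$ witnesses $\mbbH_{2\sqrt{\eps}}(X,\mu,\rho_m) \le \log_2 N$, as required.

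The main delicate point is choosing the right quantity to average when selecting $m$. The naive estimate $\sum_i r_i(m) \le \sqrt{\eps}$ (obtained by not normalizing by $\mu(X_i)$) is too weak: a tiny cell $X_i$ could concentrate all of the bad mass, making $r_i(m)/\mu(X_i)$ comparable to $1$ and ruining the bound on $\mu(Y_0)$. Normalizing by $\mu(X_i)$ inside the sum over $m$ compensates for this precisely, because the pointwise Markov bound already carries a factor $\sqrt{\eps}$ at the pair level, and integration then returns a linear factor $\mu(X_i)$ rather than the quadratic $\mu(X_i)^2$ one would otherwise get.
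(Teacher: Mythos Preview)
Your argument is correct. The paper does not include its own proof of this lemma; it merely quotes the result from~\cite{PZ15} as a technical input, so there is nothing in the paper to compare against. Your approach---realizing $\mbbH_\eps(\tilde\rho)$ by a partition, using Markov in the index~$m$ together with the normalization by $\mu(X_i)$ to find an $m$ with $\sum_i r_i(m)/\mu(X_i) \le \sqrt{\eps}$, and then shrinking each cell to a $\rho_m$-ball of radius $\sqrt{\eps}$ around a well-chosen center---is the natural one and recovers the stated bound with the right constants. The only omissions are routine: one should assume $\mu(X_i) > 0$ (absorbing null cells into $X_0$), and the choice of the center $p_i$ uses that a nonnegative measurable function attains a value at most its average on a set of positive measure.
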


\begin{lm}\label{lm_upperbound}
    Let $\rho_1, \rho_2, \ldots, \rho_k$ be admissible semimetrics on $(X, \mu)$ such that almost surely $\rho_i \le 1$ for all $i = 1, \ldots, k$. Assume that $\eps \in (0,1)$ satisfies $\mbbH_\eps(X, \mu, \rho_i) > 0$. Then the following inequality holds
    \begin{equation}
    \mbbH_{2\sqrt{\eps}}\left(X, \mu, 
     	\frac{1}{k} \suml_{ i= 1}^{k}\rho_i \right) \le
     	2 \suml_{ i= 1}^{k} \mbbH_\eps(X, \mu, \rho_i). 
    \end{equation}
\end{lm}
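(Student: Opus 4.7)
My plan is to choose, for each $i$, a witness partition for $\mbbH_\eps(X,\mu,\rho_i)$, pass to the common refinement, and then argue that after removing a small-measure exceptional set, every cell has $\tilde\rho$-diameter bounded by $2\sqrt{\eps}$. Concretely, I fix a decomposition $X = X_0^i \sqcup X_1^i \sqcup \cdots \sqcup X_{k_i}^i$ with $\mu(X_0^i) < \eps$ and $\diam_{\rho_i}(X_j^i) < \eps$ for $j \ge 1$, where $k_i = 2^{H_i}$ and $H_i := \mbbH_\eps(X,\mu,\rho_i)$. The hypothesis $H_i > 0$ forces $k_i \ge 2$, which is crucial for the final cell count. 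Let $\pi = \bigvee_i \{X_0^i,\ldots,X_{k_i}^i\}$; its cells are labelled by tuples $(j_1,\ldots,j_k)$ with $j_i \in \{0,1,\ldots,k_i\}$.

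The central idea is to single out points that are ``exceptional'' for too many of the $\rho_i$ at once. I let
\begin{equation}
    Y = \bigl\{ x \in X \,:\, \#\{i : x \in X_0^i\} > \sqrt{\eps}\, k \bigr\}.
\end{equation}
Since $\sum_i \mu(X_0^i) < k\eps$, a one-line Markov estimate gives $\mu(Y) < \sqrt{\eps} < 2\sqrt{\eps}$, so $Y$ serves as the admissible exceptional set. For any cell $C$ of $\pi$ disjoint from $Y$, the label $(j_1,\ldots,j_k)$ has at most $\sqrt{\eps}\, k$ zero coordinates, so for $x,y \in C$ we get $\rho_i(x,y) < \eps$ when $j_i \ge 1$ and $\rho_i(x,y) \le 1$ otherwise; averaging yields
\begin{equation}
    \tilde\rho(x,y) \le (1-\sqrt{\eps})\eps + \sqrt{\eps} \cdot 1 < 2\sqrt{\eps},
\end{equation}
using $\eps < \sqrt{\eps}$ on $(0,1)$. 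The threshold $\sqrt{\eps}\, k$ is tuned precisely so that both the measure of $Y$ and the contribution of the ``bad'' coordinates fit inside $2\sqrt{\eps}$.

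It remains to count the cells of $\pi$. The number is at most $\prod_i (k_i+1)$, and because $k_i \ge 2$ one has $k_i + 1 \le k_i^2 = 2^{2H_i}$, giving the bound $2^{2\sum_i H_i}$. Therefore
\begin{equation}
    \mbbH_{2\sqrt{\eps}}\!\left(X,\mu,\tilde\rho\right) \le 2 \sum_{i=1}^k H_i,
\end{equation}
which is exactly the asserted inequality. There is no genuine obstacle here; the only delicate point is balancing the Markov threshold with the target diameter $2\sqrt{\eps}$, and leveraging $H_i \ge 1$ in the counting step so that the $k_i+1$ factor (rather than $k_i$) does not cost more than a factor of $2$ per coordinate.
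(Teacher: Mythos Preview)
Your argument is correct. The paper does not supply its own proof of this lemma; it simply cites \cite{PZ15}, so there is no in-paper proof to compare against line by line. That said, your approach---take witness partitions for each $\mbbH_\eps(\rho_i)$, pass to the common refinement, use Markov's inequality on the counting function $N(x) = \#\{i : x \in X_0^i\}$ to carve out a small exceptional set, and bound the number of cells via $k_i + 1 \le k_i^2$---is exactly the standard route and matches the argument in \cite{PZ15}. The key observations you isolate (that $Y$ is automatically a union of $\pi$-cells because $N$ is constant on cells, and that the hypothesis $H_i > 0$ is needed precisely so that $k_i \ge 2$ and hence $k_i + 1 \le k_i^2$) are the only non-mechanical points, and you handle both correctly.
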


\section{Proof of theorem~\ref{theorem_sym}}
Although the group $SL(2, \overline{\mbbF}_q)$ can not be embedded into $S_\infty$ (in this case we would conclude due to Proposition~\ref{proposition_subgroup}), one can consider embeddings of its finite subgroups. This is the main idea of the proof:  to identify a linear group $SL(2, \mbbF_q)$, where $q = p^k$, with a subgroup of the symmetric group $S_{m_q}$, where $m_q = |SL(2, \mbbF_q)|$ via the permutation action of $SL(2, \mbbF_q)$ on itself. 

Let $g_0 \in SL(2, \mbbF_q)$ be the element represented by the matrix $\big(\begin{smallmatrix}
  1 & 1\\
  0 & 1
\end{smallmatrix}\big)$. Clearly, $g_0$ has order~$p$ and, therefore, is a disjoint union of $p$-cycles as a permutation of the elements of $SL(2, \mbbF_q)$. Then for every permutation $\sigma \in S_{m_q}$ which is a disjoint union of $p$-cycles there exists an embedding $ \tau \colon SL(2, \mbbF_q) \hookrightarrow S_{m_q}$ that maps $g_0$ to $\sigma$.
Further, let us define 
\begin{equation}
    \sigma^0 = (1, 2, \ldots, p) \in S_{m_q} \le S_\infty    
\end{equation}
be the cyclic permutation of the first~$p$ natural numbers, and
\begin{equation}
    \sigma_1 = (1, 2, \ldots, p)^2 (p+1, p+2, \ldots, 2p) \ldots (m_q - p + 1, m_q - p + 2, \ldots, m_q) \in  S_{m_q};   
\end{equation}
\begin{equation}
    \sigma_2 = (p, p -1, \ldots, 1) (2p, 2p-1, \ldots, p + 1) \ldots (m_q, m_q -1 \ldots, m_q - p + 1) = \sigma^0 \sigma_1^{-1}.   
\end{equation}
By definition, all three permutations $\sigma_0$, $\sigma_1$ and $\sigma_2$ commute and have order~$p$. Moreover, both $\sigma_1$ and $\sigma_2$ are (as elements of $S_{m_q}$) disjoint unions of $p$-cycles and, hence, we have two embeddings $\tau_1$ and $\tau_2$ of $SL(2, \mbbF_q)$ to $S_{m_q}$ that map $g_0$ to $\sigma_1$ and $\sigma_2$ respectively.

Now suppose that $S_\infty$ acts essentially freely on a standard measure space $(X,\mu)$. We can find a measurable partition~$\xi$ of~$X$ into~$p$ parts of equal measure, such that for almost every $x \in X$ the points~$x$ and~$\sigma^0 x$ lie in different cells of the partition~$\xi$. Let $\rho$ be a cut semimetric corresponding to~$\xi$. 

Assuming that $\mbbH_{\eps^2}(X,\mu, (S_{\infty})_{av}^{m_q}\rho) < \log k$ we can find an orbit $S_{m_q} x$ such that 
\begin{equation}
    \mbbH_{\eps}(S_{m_q} x,\nu, (S_{\infty})_{av}^{m_q}\rho) < \log k,    
\end{equation}
where $\nu$ is the uniform measure on~$S_{m_q} x$.
Note that the average $(S_{\infty})_{av}^{m_q}\rho$ restricted to $S_{m_q} x$ is the average over $S_{m_q}$ of $\rho$ restricted to~$S_{m_q} x$. Identifying the orbit~$S_{m_q} x$ with the symmetric group~$S_{m_q}$ we obtain a semimetric~$\rho$ on~$S_{m_q}$ with the following properties. First, for any $\sigma \in S_{m_q}$ the distance~$\rho(\sigma^0 \sigma, \sigma)$ is equal to~$1$. Second, the average 
\begin{equation}
   \Tilde{\rho} = \frac{1}{|S_{m_q}|} \sum_{\sigma \in S_{m_q}} \sigma^{-1} \rho 
\end{equation}
has $\eps$-entropy less than~$\log k$. 

 For any $\sigma \in S_{m_q}$ we have 
 \begin{equation}
    \rho(\sigma_1^{-1} \sigma, \sigma_2 \sigma) = \rho(\sigma_1^{-1} \sigma, \sigma_0 \sigma_1^{-1} \sigma) = 1.
\end{equation}
 Hence, either $\rho(\sigma_1^{-1} \sigma,  \sigma) = 1$ or $\rho(\sigma_2 \sigma,  \sigma) = 1$. One of these equalities holds for at least half of the elements $\sigma$ of $S_{m_q}$. Without loss of generality assume that there exists a subset $A \subset S_{m_q}$, such that $|A| \ge \frac{1}{2} |S_{m_q}|$, and  $\rho(\sigma_2 \sigma,  \sigma) = 1$. Then, there exists a subset $B \subset A$ such that $|B| \ge \frac{1}{2p} |S_{m_q}|$ and $\rho(x,y) = 0$ for any $x, y \in B$, since~$\rho$ is a cut semimetric corresponding to a partition into $p$ parts. Consider a family~$\mathcal{C}$ of $\tau_2(SL(2, \mbbF_q))$ cosets that intersect~$B$ by at least $\frac{1}{4p} m_q$:
 \begin{equation}
     \mathcal{C} = \big\{ \{\tau_2(SL(2, \mbbF_q)) \sigma\} \colon  |\tau_2(SL(2, \mbbF_q)) \sigma \cap B| \ge \frac{1}{4p} m_q\big\}.
 \end{equation}
 Clearly, the cardinality of $\mathcal{C}$ is at least $\frac{1}{4p} \frac{|S_{m_q}|}{m_q}$. Indeed, otherwise, the number of elements in~$B$ would not exceed
 \begin{equation}
   |\mathcal{C}| \cdot m_q + \frac{1}{4p} m_q \cdot \frac{|S_{m_q}|}{m_q} < \frac{1}{2p} |S_{m_q}|.
\end{equation}
 Since $\sigma_2 \in \tau_2(SL(2, \mbbF_q))$, we have for each $C \in \mathcal{C}$  the inclusion  $\sigma_2 (B\cap C) \subset C$ and, moreover, $\rho(x, y) = 1$ for any two points $x \in (B\cap C)$ and $ y \in  \sigma_2 (B\cap C)$.
 Therefore the mean value of the semimetric~$\rho$ restricted to~$C$ is at least $(\frac{1}{4p})^2$. Hence, restriction to~$C$ of the $\tau_2(SL(2, \mbbF_q))$-average
 \begin{equation}
    \Tilde{\rho}^q = \frac{1}{m_q} \sum_{\sigma \in \tau_2(SL(2, \mbbF_q))} \sigma^{-1} \rho
\end{equation}
 has mean value at least $(\frac{1}{4p})^2$ as well due to $\tau_2(SL(2, \mbbF_q))$-invariance of~$C$.
Since 
\begin{equation}
   \Tilde{\rho} = \frac{1}{|S_{m_q}|} \sum_{\sigma \in S_{m_q}} \sigma^{-1} \Tilde{\rho}^q, 
\end{equation}
due to Lemma~\ref{lm_lowerbound}, the $2\sqrt{\eps}$-entropy of $\Tilde{\rho}^q$ is bounded from above by the $\eps$-entropy of $\Tilde{\rho}$ that is $\log k$. 

Let $X_0, X_1, \ldots, X_k$ be the corresponding partition of $S_{m_q}$ realizing the $2\sqrt{\eps}$-entropy of $\Tilde{\rho}^q$. That is  $|X_0| < 2\sqrt{\eps} |S_{m_q}|$ and $\diam_{\Tilde{\rho}^q} X_i < 2\sqrt{\eps}$ for $i = 1, \ldots, k$.
There are at least $(1 - \sqrt[4]{\eps}) |\mathcal{C}|$ cosets from~$\mathcal{C}$ intersecting~$X_0$ by less than $8p\sqrt[4]{\eps} m_q$. For any such coset $C$, the $8p\sqrt[4]{\eps}$-entropy of $\Tilde{\rho}^q$ restricted to~$C$ is bounded from above by~$\log k$. Let us fix any such coset $C \in \mathcal{C}$.

Now, let us recall the following proposition from \cite{Vep22}.
\begin{prop}\label{prop_sl2}
Let $\rho$ be a left--invariant semimetric on $SL(2, {\mbbF_{q}})$ with diameter greater than~$3\del$, where $\del \in (0,\frac{1}{2})$. Then $\mbbH_\del(SL(2, {\mbbF_{q}}), \nu, \rho) \ge c\log q $, where~$\nu$~is the uniform probability measure and~$c$ is an absolute constant.
\end{prop}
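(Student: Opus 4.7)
The plan is to convert the lower bound on the $\delta$-entropy into a combinatorial statement about a ``ball'' in $SL(2,\mbbF_q)$ and then invoke the Helfgott--Pyber--Szab\'o growth dichotomy for finite simple groups of Lie type~\cite{H, PS}. The rough idea is that a small $\delta$-entropy forces an efficient covering of the group by translates of the $\delta$-ball around the identity, but growth results in $SL(2,\mbbF_q)$ forbid a small symmetric subset whose triple product is still a proper subset of the group.

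First I would introduce $B = \{g \in SL(2,\mbbF_q) : \rho(e,g) < \delta\}$. By symmetry of $\rho$ and left-invariance, $B$ is a symmetric set containing the identity, and any subset $Y$ with $\diam_\rho(Y) < \delta$ lies in a single left translate $y_0 B$: pick any $y_0 \in Y$ and use $\rho(e, y_0^{-1}y) = \rho(y_0, y) < \delta$. Consequently, if $\mbbH_\delta(SL(2,\mbbF_q), \nu, \rho) = \log k$, a realising partition $X_0, X_1, \dots, X_k$ satisfies $|X_i| \le |B|$ for $i \ge 1$, and $\nu(X_0) < \delta$ yields $|B| \ge (1-\delta) m_q / k$ with $m_q = |SL(2,\mbbF_q)|$. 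Furthermore, the triangle inequality together with left-invariance gives $B^3 \subset \{g : \rho(e,g) < 3\delta\}$, and since the $\rho$-diameter exceeds $3\delta$ there is an element outside this set, so $B^3 \subsetneq SL(2,\mbbF_q)$.

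Next I would split into two cases according to whether $B$ generates $SL(2,\mbbF_q)$. If it does not, then Dickson's classification of subgroups of $PSL(2,\mbbF_q)$ shows that $\langle B \rangle$ has order $O(q^2)$ (the largest proper subgroups --- Borel, torus normalisers, subfield subgroups --- all have order at most $O(q^2)$, and the sporadic exceptions are of bounded size), whence $|B| = O(m_q/q)$. If $B$ does generate the whole group, then since $B^3 \ne SL(2,\mbbF_q)$ the Helfgott--Pyber--Szab\'o theorem provides an absolute $\eta > 0$ with $|B^3| \ge |B|^{1+\eta}$, which forces $|B| \le m_q^{1/(1+\eta)}$. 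In either case $|B| \le C\, m_q^{1-\gamma}$ for absolute constants $C,\gamma > 0$, and combined with the covering bound this gives $k \ge c'\, m_q^{\gamma}$; using $m_q \asymp q^3$ I obtain $\mbbH_\delta(\rho) \ge c \log q$.

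The main obstacle is the growth-theoretic input: one needs the Helfgott--Pyber--Szab\'o dichotomy to apply uniformly in $q = p^k$ with an absolute exponent $\eta$, which is the non-trivial part when $k$ is large due to the presence of subfield-type subgroups. The subgroup case is elementary once Dickson's theorem is in hand; the essential content of the proposition is really the uniform product-set growth across the whole family $\{SL(2,\mbbF_q)\}$ supplied by~\cite{H, PS}.
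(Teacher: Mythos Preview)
The paper does not prove this proposition: it is quoted from~\cite{Vep22} and used as a black box in the proof of Theorem~\ref{theorem_sym}. The paper does signal, both in the introduction and by its citations, that the argument in~\cite{Vep22} goes through the Helfgott--Pyber--Szab\'o growth estimates~\cite{H, PS}, and your sketch follows precisely that route.

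Your reduction is correct: left-invariance makes $B=\{g:\rho(e,g)<\delta\}$ symmetric with $e\in B$, any set of $\rho$-diameter below $\delta$ sits in a single translate of $B$, so a realising partition forces $|B|\ge (1-\delta)m_q/k$; and the diameter hypothesis together with the triangle inequality gives $B^3\subsetneq SL(2,\mbbF_q)$. The case split (non-generating via Dickson, generating via product growth) is the standard one and yields $|B|\le C\,m_q^{1-\gamma}$, hence $\log k\ge c\log q$. The one technical wrinkle worth a sentence is that Pyber--Szab\'o is usually stated for the simple quotient $PSL(2,\mbbF_q)$; you can either cite a version for bounded-rank quasisimple groups (Helfgott's original paper already treats $SL_2(\mbbF_p)$ directly), or note that projecting to $PSL$ loses at most a factor of~$2$ and that the leftover possibility $\bar B^3=PSL(2,\mbbF_q)$ with $B^3\ne SL(2,\mbbF_q)$ is disposed of by the Gowers--Nikolov--Pyber quasirandomness bound, since the minimal nontrivial complex representation of $SL(2,\mbbF_q)$ has dimension $\gtrsim q$ and this alone forces $|B|\le m_q/q^{1/3}$ whenever $B^3$ is proper. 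With that remark in place your argument is complete and matches the intended one.
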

 Since $\tau_2$ is an embedding we can identify the coset~$C$ with~$SL(2, {\mbbF_{q}})$. Taking $\eps < (\frac{1}{384 p^3})^4$ we obtain that the diameter of $\Tilde{\rho}^q$ restricted to~$C$ is at least its mean value which is at least $24p \sqrt[4]{\eps}$. Finally, its $8p\sqrt[4]{\eps}$-entropy does not exceed   $\log k$ and, hence, due to Proposition~\ref{prop_sl2} we have 
 \begin{equation}
    \log k \ge c \log q \ge c_0 \log \log |S_{m_q}|, 
 \end{equation}
  where $c_0$ is an absolute constant. Therefore, for the F\o lner sequence $\lambda = \{S_{m_{p^k}}\}_k$ and any essentially free p.m.p. action~$\alpha$ of~$S_\infty$ we have 
  \begin{equation}\label{eq_lowboundsym}
      \mathcal{H}(\alpha, \lambda) \succsim \log \log |S_{m_{p^k}}|,
  \end{equation}
    hence,  $S_\infty$ has a scaling entropy growth gap.
\section{Closure properties}\label{section_closure}

In this section, we study several closure properties of the family of groups that have a scaling entropy growth gap. We start with the proof of Proposition~\ref{proposition_subgroup} that gives the last remaining part of the proof of Theorem~\ref{theorem_Houghton}.

\begin{proof}[Proof of Proposition~\ref{proposition_subgroup}]
    Let $\{W_n\}_n$ be a F\o lner sequence in~$G$ and $\{F_n\}_n$ a F\o lner sequence in~$H$. Let~$\phi(n)$ be a corresponding absolute lower bound for scaling entropy of $(H, \{F_n\}_n)$. Let $\alpha$ be an essentially free p.m.p. action of~$G$ on a standard probability space~$(X, \mu)$ and let $\beta$ be its restriction to $H$.
    Let~$\rho$ be a measurable admissible metric on~$(X,\mu)$ bounded from above by $1$ almost everywhere. Since~$\alpha$ is essentially free, $\beta$ is essentially free as well. Therefore, due to Proposition~\ref{proposition_definition} we have for any $\del > 0$ sufficiently small 
    \begin{equation}\label{eq_98457694}
        \mbbH_\del(X, \mu, H_{av}^{F_n}) \succsim \phi(n).
    \end{equation}
    For any integer~$n$, there exits some~$k_n$ such that for any $r > k_n$ the inequality 
    \begin{equation}\label{eq_6745665}
        |F_n W_{r} \triangle W_{r}| < 2^{-n} | W_{r}|
    \end{equation}
    is satisfied. Then
    \begin{equation}
        \frac{1}{|W_{r}|}\suml_{g \in W_{r}} g^{-1} \frac{1}{|F_n|}\suml_{h \in F_n} h^{-1} \rho  
        \le \frac{1}{|W_r|} \suml_{f \in F_n W_r} f^{-1} \rho = G_{av}^{W_r} \rho + l_1,
    \end{equation}
    where the term~$l_1$ is bounded in absolute value by~$2^{-n}$. The last equality holds true due to inequality~\eqref{eq_6745665}. 
    Take~$\eps > 0$ satisfying 
    $\mbbH_{4\sqrt{\eps}}(H_{av}^{F_n}\rho) \succsim \phi(n)$. For sufficiently large~$n$, the term~$l_1$ is negligible for computing $\eps$--entropy of~$G_{av}^{W_{r}} \rho$. Lemma~\ref{lm_lowerbound} together with relation~\eqref{eq_98457694} give
    \begin{equation}
        \mbbH_\eps(G_{av}^{W_{r}}\rho) \ge 
        \mbbH_{4\eps}(G_{av}^{W_r} \rho + l_1)  \ge
        \mbbH_{4\sqrt{\eps}}(G_{av}^{F_n}\rho) \succsim \phi(n).
    \end{equation}
    Therefore, $G$~has a scaling entropy growth gap with respect to~$\{W_r\}$ and bound function $\psi(r) = \phi(n(r))$, where~$n(r)$ is the maximal~$n$ such that $k_n < r$.
    \end{proof}
    
\begin{prop}
    Let $H$ be a finite index subgroup of an amenable group $G$ and assume that $G$ has a scaling entropy growth gap. Then the subgroup $H$ has a scaling entropy growth gap.
\end{prop}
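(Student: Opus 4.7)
The plan is to reduce the statement to the case where the subgroup is normal and then invoke Proposition~\ref{proposition_subgroup}. I will set $N = \bigcap_{g \in G} g H g^{-1}$, the normal core of $H$, which is a normal subgroup of $G$ of finite index that is contained in $H$. The key claim to establish is that if $N \lhd G$ is a normal finite-index subgroup and $G$ has a scaling entropy growth gap, then so does $N$. Once this is in place, Proposition~\ref{proposition_subgroup} applied to the inclusion $N \le H$ (with $H$ amenable as a subgroup of the amenable group $G$) yields the gap for $H$.

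To prove the key claim, given an arbitrary essentially free p.m.p. action $\beta \colon N \curvearrowright (Y,\nu)$, I will form the induced $G$-action $\alpha = \Ind_N^G \beta$ on the space $X = \{1,\ldots,D\} \times Y$, where $D = [G:N]$, equipped with the product of the uniform counting measure and $\nu$; this action is essentially free. Fix coset representatives $g_1 = e, g_2, \ldots, g_D$ of $N$ in $G$, take a Følner sequence $\{F_n\}$ of $N$, and put $W_n = \bigsqcup_l g_l F_n$. Normality of $N$ makes $\{W_n\}$ a Følner sequence of $G$. Given a bounded admissible summable generating semimetric $\rho$ on $Y$ with $\rho \le 1$, I will work with the semimetric on $X$ defined by
\[
    \hat\rho((i,y),(j,y')) = \indicator[i \ne j] + \rho(y,y'),
\]
which is itself admissible, summable, and generating for $\alpha$.

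The central computation is the pointwise identity
\[
    G_{av}^{W_n}\hat\rho((l,y),(l,y')) = N_{av}^{\pi(W_n g_l)}\rho(y,y'),
\]
where $\pi \colon G \to N$ returns the $N$-component in the decomposition $g = g_j n$. Using normality one shows that the multiset $\pi(W_n g_l) \subset N$ decomposes as $\bigsqcup_k n_{k,l}\phi_l(F_n)$, where $\phi_l(n) = g_l^{-1} n g_l$ is an automorphism of $N$; hence $\pi(W_n g_l)$ is a union of $D$ translates of a Følner set of $N$. Setting $\tilde F_n = \bigcup_l \pi(W_n g_l)$, the resulting sequence is Følner in $N$, and a direct averaging argument yields the bound $G_{av}^{W_n}\hat\rho((l,y),(l,y')) \le D \cdot N_{av}^{\tilde F_n}\rho(y,y')$. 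Consequently any admissible cover $Y = Y_0 \sqcup Y_1 \sqcup \cdots \sqcup Y_M$ realising $\mbbH_\eps(Y,\nu, N_{av}^{\tilde F_n}\rho)$ lifts to the cover $X_0 = \{1,\ldots,D\} \times Y_0$, $X_i^l = \{l\} \times Y_i$ of $X$, each $X_i^l$ having $G_{av}^{W_n}\hat\rho$-diameter at most $D\eps$. This gives $\mathcal{H}(\alpha,\{W_n\}) \precsim \mathcal{H}(\beta,\{\tilde F_n\})$; combined with the gap for $G$ I conclude $\mathcal{H}(\beta,\{\tilde F_n\}) \succsim \phi$ for the fixed unbounded function $\phi$ witnessing the $G$-gap, and Følner-independence of the gap property finishes the reduction. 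The main obstacle I expect is the combinatorial description of the multiset $\pi(W_n g_l)$ and the verification that it is Følner in $N$; this is precisely the step where normality of $N$ is essential, and it is what motivates the preliminary reduction to the normal core rather than a direct attack on general finite-index $H$.
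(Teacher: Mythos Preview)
Your argument is correct (up to a harmless constant: the pointwise bound comes out as $D^2$ rather than $D$, since $|\tilde F_n|/|F_n|\le D^2$), but it follows a genuinely different route from the paper. The paper works directly with an arbitrary finite-index subgroup~$H$, takes a two-sided F\o lner sequence~$\{W_n\}$ in~$G$, extracts $F_n = W_n g_{i_0}^{-1}\cap H$, and uses the \emph{coinduced} action $G\curvearrowright \prod_i g_i^{-1}(X,\mu)$ together with Lemma~\ref{lm_upperbound} to bound $\mathcal{H}(G,\beta)\precsim \mathcal{H}(H,\alpha)$. You instead first pass to the normal core~$N$, which lets you build a F\o lner sequence in~$G$ of the simple product form $W_n=\bigsqcup_l g_l F_n$ and makes the combinatorics of $\pi(W_n g_l)$ transparent; you then use the \emph{induced} action on the finite extension $\{1,\dots,D\}\times Y$ and a direct fibrewise diameter estimate, bypassing Lemma~\ref{lm_upperbound} entirely, and finally climb back from~$N$ to~$H$ via Proposition~\ref{proposition_subgroup}. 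The trade-off: the paper's argument is a single step and shows that two-sided F\o lner sequences suffice to handle non-normal~$H$ directly, while your reduction to the normal case makes the averaging identity cleaner and more elementary at the cost of an extra (easy) step. One small remark: the paper's macro \verb|\Ind| actually typesets as ``CInd'', so if you keep your notation you should spell out that you mean the ordinary induced action, not coinduction.
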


\begin{proof}
    Since $H$ has a finite index, the group $G$ decomposes as a finite disjoint union of right cosets. That is there are $g_1, \ldots, g_k \in G$ , where $k$ is the index of~$H$ in~$G$, such that  $G = \bigcup_i Hg_i$. 
    Let~$\{W_n\}_n$ be~a~F\o lner sequence in $G$ which is simultaneously left and right F\o lner. We have 
    \begin{equation}
        W_n = \bigcup\limits_{i= 1}^k (W_n \cap Hg_i).
    \end{equation}
    Find an index $i_0$ for which the set $W_n \cap Hg_i$ has the largest cardinality among these intersections. By passing to a subsequence, we can assume that~$i_0$ does not depend on~$n$. Let 
    \begin{equation}
    F_n = W_n g_{i_0}^{-1} \cap H \subset H.
    \end{equation}
    Clearly $|F_n| \ge \frac1k |W_n|$ and, hence, $\{F_n\}_n$ is a left F\o lner sequence in~$H$ since $W_n$ is a left F\o lner sequence in~$G$. Moreover, since $W_n$ is also a right F\o lner sequence we have for any~$\eps > 0$ and any sufficiently large~$n$ 
    \begin{equation}
        |F_n g_i \triangle (W_n \cap Hg_i)| < \eps k |F_n|,
    \end{equation}
    due to $g_{i_0}^{-1} g_i$ right almost invariance of $W_n$. Therefore, the set 
    \begin{equation}
        \Tilde{W}_n = \bigcup\limits_{i = 1}^k F_n g_i
    \end{equation}
    satisfies $|\Tilde{W}_n \triangle W_n | = o(|W_n|)$ and, hence, is a F\o lner sequence in~$G$.
    
    Now let $H \acts{\alpha} (X,\mu)$ be an essentially free action of~$H$. Let $G \acts{\beta} \prod_{i = 1}^k g_i^{-1}(X,\mu) = (Y, \nu)$ be the corresponding coinduced action of $\alpha$ from~$H$ to~$G$ (see, e.\,g., \cite{KL}). Let $\rho$ be a bounded admissible metric on $(X,\mu)$. Then the function $\Tilde{\rho}(x,y) = \rho(x_1, y_1)$ is an admissible generating semimetric on~$Y$. We have 
    \begin{equation}\label{eq_84567834}
      \frac{1}{|\tilde{W}_n|} \sum_{g \in \tilde{W}_n} g^{-1} \Tilde{\rho} = \frac{1}{k} \sum_{i = 1}^k g_i^{-1} \frac{1}{|F_n|}\sum_{h \in F_n} h^{-1} \Tilde{\rho}.   
    \end{equation}
    Clearly, for any $i = 1, \ldots, k$ and any $\eps > 0$
    \begin{equation}
        \mbbH_\eps (Y, \nu, \frac{1}{|F_n|}\sum_{h \in F_n} h^{-1} \Tilde{\rho}) = \mbbH_\eps(X, \mu, H_{av}^{F_n} \rho).
    \end{equation}
    And, since the right hand side of~\eqref{eq_84567834} is a finite average, we have due to Lemma~\ref{lm_upperbound}
    \begin{equation}
        \mathcal{H}(G, \beta, \{\Tilde{W}_n\}) \precsim \mathcal{H}(H, \alpha, \{F_n\}_n).
    \end{equation}
    Since by our assumptions $G$~has a scaling entropy growth gap, there exists a function $\phi(n)$ tending to infinity (which depends only on the F\o lner sequence $\{\Tilde{W}_n\}_n$) such that $\mathcal{H}(G, \beta, \{\Tilde{W}_n\}) \succsim \phi$. Then
    \begin{equation}
        \mathcal{H}(H, \alpha, \{F_n\}_n) \succsim \phi.
    \end{equation}
    Therefore, $H$ has a scaling entropy growth gap as well since $\alpha$ was chosen as an arbitrary essentially free action of~$H$. 
\end{proof}

\begin{cor}
    The infinite Alternating group $A_\infty$ of all even permutations of integer numbers has a scaling entropy growth gap.
\end{cor}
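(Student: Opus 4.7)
The plan is to derive this corollary directly from the two results immediately preceding it: Theorem~\ref{theorem_sym}, which provides a scaling entropy growth gap for $S_\infty$, and the finite index proposition above, which transfers the gap property from an ambient amenable group to any finite index subgroup. There are no new dynamical estimates to carry out; the entire argument reduces to a structural observation about the embedding $A_\infty \hookrightarrow S_\infty$.

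First I would note that $A_\infty$ is a subgroup of $S_\infty$: concretely, it is the kernel of the sign homomorphism $\operatorname{sgn}\colon S_\infty \to \{\pm 1\}$. Since $\operatorname{sgn}$ is surjective (any transposition maps to $-1$), the subgroup $A_\infty$ has index exactly $2$ in $S_\infty$. In particular $A_\infty$ is amenable as a subgroup of the locally finite, hence amenable, group $S_\infty$, so the notion of scaling entropy growth gap is well defined for it.

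Next I would invoke Theorem~\ref{theorem_sym} to conclude that $S_\infty$ has a scaling entropy growth gap, and then apply the finite index proposition with $G = S_\infty$ and $H = A_\infty$ to transport the gap property down to $A_\infty$. This immediately yields the corollary: for a suitable F\o lner sequence in $A_\infty$ obtained from the construction in the proof of that proposition, every essentially free p.m.p. action of $A_\infty$ has scaling entropy asymptotically bounded below by an unbounded function.

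The only point that might conceivably deserve comment is whether one can exhibit a F\o lner sequence in $S_\infty$ that is simultaneously left and right F\o lner, as required by the proof of the finite index proposition; but since $S_\infty$ is locally finite, the ascending sequence of finite symmetric groups $S_n \le S_\infty$ is a two-sided F\o lner sequence, so the hypothesis is trivially satisfied. Hence there is no real obstacle, and the statement follows as a one-line corollary of the two preceding results.
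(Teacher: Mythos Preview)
Your proposal is correct and matches the paper's intended argument: the corollary is stated without proof immediately after the finite-index proposition, and the implicit reasoning is exactly that $A_\infty$ has index~$2$ in $S_\infty$, so Theorem~\ref{theorem_sym} together with that proposition yield the result. Your remark about the two-sided F\o lner sequence is a harmless elaboration, not a deviation.
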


\begin{prop}
    Let $H$ and $G$ be amenable groups without a scaling entropy gap. Then the direct product $G \times H$ has no scaling entropy gap. 
\end{prop}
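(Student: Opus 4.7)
The natural candidate for an essentially free action of $G\times H$ with small scaling entropy is the product action $\gamma = \alpha\times\beta$ of $G\times H$ on $(X\times Y, \mu\times\nu)$, where $\alpha$ is an essentially free p.m.p. action of $G$ on $(X,\mu)$ and $\beta$ of $H$ on $(Y,\nu)$. I would work with the product F\o lner sequence $\{F_n^G\times F_n^H\}_n$ (the gap property does not depend on its choice) and the pullback semimetric $\rho = \tilde{\rho}_X + \tilde{\rho}_Y$, where $\rho_X,\rho_Y$ are summable admissible generating semimetrics on the factors and tildes denote the natural extension to $X\times Y$. A direct computation gives $(G\times H)^{F_n^G\times F_n^H}_{av}\rho = G^{F_n^G}_{av}\rho_X + H^{F_n^H}_{av}\rho_Y$, and taking products of $\eps$-efficient partitions on each factor yields the sum estimate
\begin{equation}
\Phi^\gamma_\rho(n, 2\delta) \le \Phi^\alpha_{\rho_X}(n, \delta) + \Phi^\beta_{\rho_Y}(n, \delta).
\end{equation}

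Fix an unbounded $\phi$. The no-gap hypothesis on $G$ furnishes an essentially free $\alpha$ with $\mathcal{H}(\alpha,\{F_n^G\}) \not\succsim \phi$, which unpacks as: for every $\delta>0$, $\limsup_n \phi(n)/\Phi^\alpha_{\rho_X}(n,\delta) = \infty$. The main obstacle is that the ``bad'' subsequences along which $\Phi^\alpha_{\rho_X}(n,\delta) = o(\phi(n))$ depend on $\delta$, and the analogous subsequences produced for $\beta$ by the no-gap hypothesis on $H$ are a priori independent of those for $\alpha$ and may fail to intersect; the sum on the right of the estimate above could then exceed $\phi$ on every common subsequence.

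The plan to circumvent this is a two-stage diagonal alignment. First, at scales $\delta_k = 1/k$, choose $n_j\to\infty$ with $\Phi^\alpha_{\rho_X}(n_j, 1/j)/\phi(n_j)\le 1/j$; the monotonicity of $\Phi^\alpha_{\rho_X}$ in $\delta$ then gives $\Phi^\alpha_{\rho_X}(n_j,\delta) = o(\phi(n_j))$ along the single subsequence $\{n_j\}$ for every $\delta>0$. After passing to a further subsequence one may assume $\phi(n_j)\to\infty$. Since $\{F_{n_j}^H\}_j$ remains F\o lner in $H$, apply the no-gap hypothesis on $H$ to this subsequence and to the unbounded function $j\mapsto \phi(n_j)$ to obtain an essentially free $\beta$ with $\mathcal{H}(\beta,\{F_{n_j}^H\})\not\succsim \phi(n_\cdot)$, and repeat the diagonal extraction to produce a sub-subsequence $\{n_{j_k}\}_k$ along which $\Phi^\beta_{\rho_Y}(n_{j_k},\delta) = o(\phi(n_{j_k}))$ for every $\delta>0$.

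Combining both estimates with the sum inequality, $\Phi^\gamma_\rho(n_{j_k}, 2\delta) = o(\phi(n_{j_k}))$ for every $\delta>0$, hence $\mathcal{H}(\gamma,\{F_n^G\times F_n^H\}) \not\succsim \phi$. The product $\gamma$ is essentially free because $\alpha$ and $\beta$ are, so the arbitrariness of $\phi$ shows that $G\times H$ has no scaling entropy growth gap.
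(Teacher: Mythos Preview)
Your proposal is correct and follows essentially the same route as the paper: product action, product F\o lner sequence, the sum estimate for $\eps$-entropy, and the two-stage choice in which one first selects $\alpha$ together with a subsequence along which its entropy is $o(\phi)$ at every scale, and then applies the no-gap hypothesis for $H$ along that sub-F\o lner sequence to choose $\beta$. The paper is slightly terser---it leaves the diagonalization implicit and avoids your second extraction by arguing directly that $\tilde\Phi+\Psi\not\succsim\phi(n_k)$ from $\tilde\Phi\prec\phi(n_k)$ and $\Psi\not\succsim\phi(n_k)$---but the content is the same.
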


\begin{proof}
    Let $\lambda = \{W_n\}_n$ be a F\o lner sequence in~$G$ and $\sigma = \{F_n\}_n$ a F\o lner sequence in $H$. Then $\tau = \{W_n \times F_n\}_n$ is a F\o lner sequence in $G \times H$. Let $\phi(n)$ be a given function increasing to infinity. Find an essentially free p.m.p. action $G \acts{\alpha} (X,\mu)$ satisfying $\mathcal{H}(\alpha, \lambda) \not \succsim \phi$. 
    There exists a subsequence $\{n_k\}_k$ such that for any $\Phi \in \mathcal{H}(\alpha, \lambda)$ and any $\eps > 0$
    \begin{equation}
     \Phi(n_k, \eps) \prec \phi(n_k).
    \end{equation}
    Let $\lambda^\p = \{W_{n_k}\}_k$, $\sigma^\p = \{F_{n_k}\}_k$, $\tau^\p = \{W_{n_k} \times F_{n_k}\}_k$, and $\tilde\Phi(k, \eps) = \Phi(n_k, \eps)$.  Find an essentially free p.m.p. action $H \acts{\beta} (Y,\nu)$ satisfying $\mathcal{H}(\beta, \sigma^\p) \not \succsim \phi(n_k)$. Consider the product action~$\alpha \times \beta$ of~$G \times H$ on~$(X,\mu) \times (Y,\nu)$, where~$G$ and~$H$ act independently via actions~$\alpha$ and~$\beta$ on the first and the second components respectively. For any $\Psi \in \mathcal{H}(\beta, \sigma^\p)$ we have $\tilde{\Phi} + \Psi \in \mathcal{H}(\alpha \times \beta, \tau^\p)$. Since $\tilde{\Phi} \prec \phi(n_k)$ and $\Psi \not \succsim \phi(n_k)$ we conclude that
    \begin{equation}
        \mathcal{H}(\alpha \times \beta, \tau^\p) \not \succsim \phi(n_k),
    \end{equation}
    and, therefore,  $\mathcal{H}(\alpha \times \beta, \tau) \not \succsim \phi(n)$ since~$\tau^\p$ is a subsequence of~$\tau$.
\end{proof}

\section{Remarks and open questions}
\subsection{Connections to other slow entropy type invariants}
The scaling entropy growth gap property can be expressed in terms of related slow entropy-type invariants. Here we give a simple alternative definition of this property in terms of slow entropy introduced in~\cite{KT} by Katok and Thouvenot. We refer the reader to survey~\cite{KKW} for definitions of this and similar invariants.
\begin{prop}\label{proposition_eqiv_def}
    An amenable group $G$ has a scaling entropy growth gap if and only if for some (hence for any) F\o lner sequence in~$G$  there exists a non-trivial scale~$\mathbf{a}_n(t)$ such that for any essentially free p.m.p. action~$\alpha$ the lower slow entropy of $\alpha$ (with respect to the given F\o lner sequence) is positive:
    $
    \underline{ent}_{\mathbf{a}}(\alpha) > 0.
    $
\end{prop}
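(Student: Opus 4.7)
The plan is to pass between the two formulations via the elementary observation that $\Phi_\rho(n,\eps)$ appearing in the definition of scaling entropy is precisely $\log_2$ of the minimal $\eps$-covering number $S_\alpha(n,\eps)$ that drives slow entropy in the sense of Katok--Thouvenot. Once that identification is in place, a scaling entropy lower bound $\phi$ corresponds to positive lower slow entropy with respect to a scale of the form $\mathbf{a}_n(t) = 2^{t\phi(n)}$, and the two statements of the gap become two ways of packaging the same asymptotic information.

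For the forward direction, I would assume $G$ has a scaling entropy growth gap with unbounded function $\phi$ and simply define the scale $\mathbf{a}_n(t) := 2^{t\phi(n)}$, which is non-trivial precisely because $\phi$ is unbounded. Fix an essentially free p.m.p. action $\alpha$ and a bounded admissible generating semimetric $\rho$. Unfolding $\mathcal{H}(\alpha,\lambda) \succsim \phi$ via the definition of $\precsim$ from Section~\ref{section_scaling_entropy}, there exist $\delta > 0$ and $c > 0$ with $\Phi_\rho(n,\delta) \ge c\phi(n)$ for all large $n$, hence $S_\alpha(n,\delta) \ge 2^{c\phi(n)} = \mathbf{a}_n(c)$. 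Comparing instead with $\mathbf{a}_n(c/2)$ gives $S_\alpha(n,\delta)/\mathbf{a}_n(c/2) \ge 2^{c\phi(n)/2} \to \infty$, so $\underline{ent}_{\mathbf{a}}(\alpha) \ge c/2 > 0$.

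For the converse I would assume a non-trivial scale $\mathbf{a}$ with $\underline{ent}_{\mathbf{a}}(\alpha) > 0$ for every essentially free $\alpha$, and try to manufacture one common unbounded $\phi$ lower-bounding the scaling entropy of all such actions simultaneously. This uniform construction is the main obstacle: a priori different actions realize arbitrarily small positive slow-entropy values $t_\alpha$, so the naive choice $\phi(n) = \log_2 \mathbf{a}_n(t_0)$ for a fixed $t_0$ does not work. The idea is to build $\phi$ as a slowly growing staircase dominated by $\log_2 \mathbf{a}_n(t)$ for every $t > 0$ at once: since $\mathbf{a}_n(1/k) \to \infty$ in $n$ for each $k \in \mbbN$, one inductively picks $N_1 < N_2 < \ldots$ with $\log_2 \mathbf{a}_n(1/k) \ge k$ on $[N_k,\infty)$ and sets $\phi(n) = k$ on $[N_k,N_{k+1})$. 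Given any essentially free $\alpha$, choose $k$ with $1/k < \underline{ent}_{\mathbf{a}}(\alpha)$; positivity of the lower slow entropy provides some $\eps > 0$ with $S_\alpha(n,\eps) \succsim \mathbf{a}_n(1/k)$, so $\Phi_\rho(n,\eps) \succsim \log_2 \mathbf{a}_n(1/k) \ge \phi(n)$ for $n \ge N_k$, and therefore $\mathcal{H}(\alpha,\lambda) \succsim \phi$.

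Finally, the claim that ``for some'' can be upgraded to ``for any'' F\o lner sequence on the slow-entropy side follows from the analogous independence already recorded for scaling entropy in~\cite{Vep22} together with the dictionary established in the two directions above. The only delicate point is the diagonal construction of the staircase $\phi$ in the reverse implication; the forward implication is a direct translation.
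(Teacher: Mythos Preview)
The paper does not actually prove this proposition: it simply asserts that ``the proof is straightforward and follows directly from the definitions of scaling entropy and slow entropy'' and points to the survey~\cite{VVZ}. Your proposal supplies the missing details, and the argument is essentially correct.

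A couple of minor remarks. First, your opening identification of $\Phi_\rho(n,\eps)$ with $\log_2 S_\alpha(n,\eps)$ is a slight overstatement: the $\eps$-entropy of a metric triple (decompositions with a small-measure remainder and small-diameter pieces) and the Hamming-ball covering number used in Katok--Thouvenot are not literally the same quantity, but they control one another up to a change of $\eps$ and a multiplicative constant, which is all your argument needs. Second, in the converse direction your staircase construction uses that $\mathbf{a}_n(1/k)\to\infty$ for every $k$; this is what ``non-trivial scale'' should mean, and you might note that if $\mathbf{a}_n(t_0)$ stayed bounded for some $t_0>0$ then $\underline{ent}_{\mathbf{a}}(\alpha)\ge t_0$ would hold for trivial reasons and carry no information about~$\alpha$. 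With these caveats, your diagonal staircase for~$\phi$ is exactly the right way to absorb the a~priori non-uniform positivity of $\underline{ent}_{\mathbf{a}}(\alpha)$ into a single unbounded lower bound, and the monotonicity of $\mathbf{a}_n(t)$ in~$t$ is what makes $\log_2\mathbf{a}_n(1/k_0)\ge\phi(n)$ hold for large~$n$ once $1/k_0<\underline{ent}_{\mathbf{a}}(\alpha)$.
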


The proof is straightforward and follows directly from the definitions of scaling entropy and slow entropy. See a recent survey~\cite{VVZ} for more on the connections between these invariants. In view of Proposition~\ref{proposition_eqiv_def}, one may call our property a "slow entropy growth gap". In a similar manner, one can give an alternative definition of this property via the notion of measure-theoretic complexity introduced by~Ferenczi in~\cite{F}.

\subsection{Open questions}
We conclude the paper with several open questions and possible directions for further investigations of the scaling entropy growth gap property. 

First, there are lots of interesting amenable groups for which it is unknown to the author whether they have this property or not. The classical lamplighter group $\mbbZ_2 \wr \mbbZ$ is residually finite and hence has no scaling entropy growth gap, however, it is unknown if a lamplighter group $A \wr \mbbZ$ with non-abelian finite group $A$ of the lamps has this property. Other interesting candidates are finitely generated simple amenable groups, the first examples of which were constructed in~\cite{JM}.

Another question is whether the possession of essentially free compact action (residual finiteness for finitely generated groups) is the only obstruction to the scaling entropy growth gap property. For now, groups with such actions are the only examples of groups known to the author that do not have a scaling entropy growth gap. 

In a recent discussion with the author, Anatoly Vershik proposed the following question which is naturally related to the property of the scaling entropy growth gap. For a given group without compact actions, what are the (essentially free) actions with \emph{the slowest possible} scaling entropy? One may consider this question as trying to find an action of a group that is the closest in this sense to being compact.  In other words, can one identify the optimal bound~$\phi(n)$ in Definition~\ref{definition_gap} and the actions for which it is achieved? Since the absence of compact actions is determined by the family of finite-dimensional representations of a group, this question should be related to the representation theory of the group as well. For~$SL(2, \overline{\mbbF}_p)$ with a specific F\o lner sequence this question was answered in~\cite{Vep22}. Even for the infinite symmetric group, this question remains open.

\section*{Acknowledgements} The author is sincerely grateful to Anatoly~Vershik for his guidance, support, and attention to the author's work on this topic for many years, as well as for being an inexhaustible source of new research problems, and for giving inspiration to attack them.
The author is also grateful to Pavel~Zatitskii for his useful comments and help in preparing this manuscript.

\bibliographystyle{amsplain}

\end{document}